\newtheorem{theorem}{Theorem}[section]
\newtheorem{lemma}[theorem]{Lemma}
\newtheorem{proposition}[theorem]{Proposition}
\newtheorem{corollary}[theorem]{Corollary}
\theoremstyle{definition}
\numberwithin{equation}{section}
\DeclareMathOperator{\R}{\mathbb R}
\begin{document}

\baselineskip=15.5pt

\title[Involution of hyper-K\"ahler manifolds and branes]{Anti-holomorphic
involutive isometry of hyper-K\"ahler manifolds and branes}

\author[I. Biswas]{Indranil Biswas}

\address{School of Mathematics, Tata Institute of Fundamental
Research, Homi Bhabha Road, Bombay 400005, India}

\email{indranil@math.tifr.res.in}

\author[G. Wilkin]{Graeme Wilkin}

\address{Department of Mathematics, National University of Singapore,
Singapore 119076}

\email{graeme@nus.edu.sg}

\subjclass[2000]{53C26, 81T30, 14P05}

\keywords{Hyper-K\"ahler manifold, complex Lagrangian, special Lagrangian,
branes, anti-holomorphic involution}

\thanks{The first-named author is supported by the J. C. Bose Fellowship.
The second named author is supported by grant R-146-000-152-133 from the National University of Singapore}

\date{}

\begin{abstract}
We study complex Lagrangian submanifolds of a compact hyper-K\"ahler 
manifold and prove two results: (a) that an involution of a hyper-K\"ahler manifold which is
antiholomorphic with respect to one complex structure and which acts non-trivially on the corresponding
symplectic form always has a fixed point locus which is complex Lagrangian with respect to one of the other
complex structures, and (b) there exist Lagrangian submanifolds which are complex with respect to one complex
structure and are not the fixed point locus of any involution which is anti-holomorphic with respect to
one of the other complex structures.
\end{abstract}

\maketitle

\section{Introduction}

Let $(X, \omega_I, \omega_J, \omega_K, g)$ be a hyper-K\"ahler manifold. In this paper 
we study submanifolds of $X$ which are complex Lagrangian (of $B$ type) with 
respect to one of the complex structures $I, J, K$ and Lagrangian (of $A$ type) with 
respect to the K\"ahler form associated to the other two complex structures. The 
motivation for this comes from the paper of Kapustin and Witten 
\cite{KapustinWitten07}, where they study such submanifolds of the moduli space of 
Higgs bundles $\mathcal{M}_H$ over a compact Riemann surface. The most interesting 
examples are called $(B,A,A)$ branes, $(A,B,A)$ branes and $(A,A,B)$ branes (cf. 
\cite[Sec. 5.6]{KapustinWitten07}). Recently, a number of authors have constructed 
discrete families of $(A,B,A)$ branes in $\mathcal{M}_H$ via anti-holomorphic 
involutions on $\mathcal{M}_H$ (see \cite{BS}, \cite{BS2}, \cite{BG}, \cite{HWW}).

In this paper we prove two results related to complex Lagrangian submanifolds of 
hyper-K\"ahler manifolds. The first one (see Section \ref{sec:fixed-points}) 
describes the geometric structure on the fixed point locus of an anti-holomorphic 
involution.

\begin{theorem}\label{thm:fixed-point}
Let $X$ be a hyper-K\"ahler manifold of complex dimension $2d$ and let $I$ be one of
the complex structures with associated K\"ahler form $\omega_I$. Let $\sigma\,:\, X\,
\longrightarrow\, X$ be an involution such that $\sigma$ is anti-holomorphic with
respect to $I$ and that $\sigma^* \omega_I \,=\, -\omega_I$. Fix an element
$\theta\,\in\, H^0(X, \,\Omega_X^2) \setminus \{0\}$ (holomorphic
with respect to $I$) such that $\sigma^* \theta
\,=\, \bar{\theta}$ and the pointwise norm of $\theta$ is $2 \sqrt{d}$. Let
$\theta_J$ and $\theta_K$ be the holomorphic symplectic forms with respect to
$J$ and $K$ respectively. Assume that the fixed point locus
$S\,=\, X^\sigma\,\subset\, X$ is nonempty. Then
\begin{itemize}
\item the fixed point locus $S$ is a special Lagrangian submanifold with respect to
both $(\omega_I, \theta^d)$ and $(\omega_K, \theta_K^d)$, and

\item $S$ is a complex Lagrangian manifold with respect to $(J, \theta_J)$. 
\end{itemize}
\end{theorem}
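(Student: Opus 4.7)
The strategy is to reduce everything to linear algebra at a fixed point, exploiting the way $\sigma_*$ interacts with $I, J, K$. First I would show that $\sigma$ is a Riemannian isometry: anti-holomorphicity with respect to $I$ gives $\sigma_*I=-I\sigma_*$, and combining this with $\sigma^*\omega_I=-\omega_I$ and $\omega_I(\cdot,\cdot)=g(I\,\cdot,\,\cdot)$ forces $\sigma^*g=g$. As an isometric involution, $\sigma_*$ then acts by conjugation as an involution on the sphere of parallel complex structures, sending $I$ to $-I$. Preservation of the quaternion relations leaves essentially one case, and after rotating $J,K$ within their span we may assume
\[
\sigma_*\circ J \,=\, J\circ\sigma_*, \qquad \sigma_*\circ K \,=\, -K\circ\sigma_*,
\]
equivalently $\sigma^*\omega_J=\omega_J$ and $\sigma^*\omega_K=-\omega_K$. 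In this gauge $\omega_J+i\omega_K$ is a holomorphic symplectic form for $I$ of pointwise norm $2\sqrt d$ satisfying $\sigma^*(\omega_J+i\omega_K)=\omega_J-i\omega_K$, so the norm and reality hypotheses force $\theta=\pm(\omega_J+i\omega_K)$.

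Next I would analyze $T_pS$ for $p\in S$. Being an isometric involution, $\sigma$ has $T_pS$ equal to the $(+1)$-eigenspace of $\sigma_*$, with orthogonal complement the $(-1)$-eigenspace. Because $\sigma_*$ anticommutes with both $I$ and $K$, these endomorphisms interchange the two eigenspaces; in particular $\dim_{\mathbb R}S=2d$ and $I(T_pS)=K(T_pS)=(T_pS)^\perp$. Because $\sigma_*$ commutes with $J$, the subspace $T_pS$ is $J$-invariant, so $S$ is a complex submanifold of $(X,J)$ of complex dimension $d$. Combining the orthogonality with $\omega_I=g(I\,\cdot,\,\cdot)$ and $\omega_K=g(K\,\cdot,\,\cdot)$ yields $\omega_I|_S=0$ and $\omega_K|_S=0$, so $S$ is Lagrangian for both $\omega_I$ and $\omega_K$. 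Since up to a nonzero scalar $\theta_J=\omega_K+i\omega_I$, this also establishes the complex Lagrangian conclusion for $(J,\theta_J)$.

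The special Lagrangian conclusions then reduce to the phases of $\theta^d|_S$ and $\theta_K^d|_S$. The crucial observation is that $\sigma^*\alpha|_S=\alpha|_S$ for every form $\alpha$, because $\sigma$ fixes $S$ pointwise. Applied to $\alpha=\theta$ together with $\sigma^*\theta=\bar\theta$, it gives $\theta|_S=\overline{\theta|_S}$, so $\theta|_S$ and hence $\theta^d|_S$ is real-valued; combined with $\omega_I|_S=0$ this is the first special Lagrangian conclusion. For the second, $\theta_K=\omega_I+i\omega_J$ up to scale, so $\theta_K|_S=i\,\omega_J|_S$ by $\omega_I|_S=0$, whence $\theta_K^d|_S=i^d\,\omega_J^d|_S$. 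Since $(S,J)\hookrightarrow(X,J)$ is a Kähler submanifold, $\omega_J^d|_S=d!\,\mathrm{vol}_S$, so $\theta_K^d|_S$ equals a constant of phase $d\pi/2$ times the induced volume form on $S$, which is the calibrated special Lagrangian condition.

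The step I expect to be subtlest is the preparatory gauge-fixing: verifying that the norm and reality hypotheses on $\theta$ single out (up to a sign) a hyper-Kähler triple in which $\sigma^*\omega_J=\omega_J$, $\sigma^*\omega_K=-\omega_K$, and $\theta=\omega_J+i\omega_K$. Once this identification is in hand, the remaining steps are transparent calculations based on the $\pm 1$-eigenspace decomposition of $\sigma_*$ and the way $\sigma_*$ commutes or anticommutes with $I, J, K$.
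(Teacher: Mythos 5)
Your proposal is correct, and it lands in the same adapted situation as the paper ($\sigma^*\omega_J=\omega_J$, $\sigma^*\omega_K=-\omega_K$, $\theta=\pm(\omega_J+i\omega_K)$), but by a different mechanism and with different mechanics for the submanifold statements. The paper goes the other way around: it takes the normalized $\theta$ as primary and \emph{defines} $J$ and $K$ by $\mathrm{Re}(\theta)(\alpha,\beta)=g(J\alpha,\beta)$ and $\mathrm{Im}(\theta)(\alpha,\beta)=g(K\alpha,\beta)$, so the gauge-fixing you flag as the subtle step is built into the definitions; the relations $\sigma^*\omega_J=\omega_J$, $\sigma^*\omega_K=-\omega_K$, $\sigma^*\theta_J=-\theta_J$, $\sigma^*\theta_K=\overline{\theta_K}$ then follow directly from $\sigma^*\theta=\overline{\theta}$ and the isometry property (its Lemma 2.1 and Proposition 2.4). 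Your twistor-sphere argument (conjugation by $d\sigma$ acts orthogonally on the sphere of parallel complex structures, sends $I$ to $-I$, and the quaternion relations exclude $\pm\mathrm{id}$ on the $(J,K)$-plane, so the action there is a reflection) is a valid substitute, but note that, exactly like the paper's appeal to $\dim H^0(X,\Omega_X^2)=1$, it uses compactness and irreducibility of $X$ (the parallel $2$-forms must be precisely the span of $\omega_I,\omega_J,\omega_K$); that hypothesis should be made explicit. For the Lagrangian and complex-submanifold conclusions the paper never touches eigenspaces: everything is obtained by pulling back form identities under the inclusion $\iota$ and using $\iota^*\sigma^*=\iota^*$ (so $\iota^*\omega_I=0$, $\iota^*\mathrm{Im}(\theta)=0$, $\iota^*\theta_J=0$, and $J$-complexity of $S$ comes from $\sigma$ being $J$-holomorphic), whereas you work pointwise with the $\pm1$-eigenspace decomposition of $d\sigma$; your route gives a bit more, namely $I(T_pS)=K(T_pS)=(T_pS)^{\perp}$ and the calibration identity $\theta_K^d|_S=\mathrm{const}\cdot d!\,\mathrm{vol}_S$, while the paper only verifies vanishing of the imaginary parts. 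One small convention point: the paper sets $\theta_K=\omega_J+\sqrt{-1}\,\omega_I$ rather than $\omega_I+i\omega_J$, and with that choice $\theta_K^d|_S$ is real and positive, so the residual phase $i^{d}$ in your final step disappears; this is harmless in any case, since the holomorphic symplectic form is only determined up to a constant scalar.
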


The second result (see Section \ref{sec:deformations}) is that deformations of these 
examples give complex Lagrangian submanifolds that are not fixed points of any 
anti-holomorphic involution.

\begin{theorem}\label{thm:deformation}
Let $X$ be a hyper-K\"ahler manifold of complex dimension $2d$ and let
$I$ be one of the complex structures with associated K\"ahler form $\omega_I$. Let
$\sigma\,:\, X\,\longrightarrow\, X$ be an involution such that $\sigma$ is
anti-holomorphic with respect to $I$ and that $\sigma^* \omega_I\,=\, -\omega_I$. Suppose
that the fixed point set $S$ is compact and has positive first Betti number. Then there exist continuous families of complex Lagrangian submanifolds of $X$ that are not fixed points of an anti-holomorphic
involution $\sigma \,:\, X\,\longrightarrow \,X$.
\end{theorem}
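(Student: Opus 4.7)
The plan is to combine Theorem~\ref{thm:fixed-point} with Voisin's unobstructedness theorem for deformations of compact complex Lagrangian submanifolds in a holomorphic symplectic manifold, and then to exploit the near-rigidity of anti-holomorphic involutions on a compact hyper-K\"ahler manifold.

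By Theorem~\ref{thm:fixed-point}, the fixed locus $S$ is a compact complex Lagrangian submanifold of $(X,J,\theta_J)$. For any such submanifold, contraction with $\theta_J$ yields a canonical isomorphism $N_{S/X}\cong \Omega^1_S$, and Voisin's theorem asserts that the local Douady moduli space $\mathcal M$ of complex Lagrangian deformations of $S$ is smooth at $[S]$ with Zariski tangent space $H^0(S,\Omega^1_S)$. Since $S$ is a compact complex submanifold of the compact K\"ahler manifold $(X,J,\omega_J)$, it inherits a K\"ahler structure from the restriction of $\omega_J$, and Hodge theory gives
\[
\dim_{\mathbb C}\mathcal M \,=\, h^{1,0}(S) \,=\, \tfrac{1}{2}\, b_1(S) \,>\, 0.
\]
A positive-dimensional continuous family $\{S_t\}_{t\in T}$ of complex Lagrangian deformations of $S$ therefore exists.

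The second step is to show that a generic member of this family is not the fixed point set of any anti-holomorphic involution of $X$. An anti-holomorphic involution $\tau$ with $\tau^*\omega_I=-\omega_I$ is an isometry of the Ricci-flat K\"ahler metric $g$ that anti-commutes with $I$, hence lies in a finite-dimensional real-analytic subset of $\mathrm{Isom}(X,g)$. By Bochner-type rigidity for compact Ricci-flat K\"ahler manifolds, the identity component of $\mathrm{Isom}(X,g)$ is generated by the parallel holomorphic vector fields on $X$ and has real dimension at most $b_1(X)$. Moreover, such an involution is uniquely determined by its (maximal totally real) fixed locus via local holomorphic continuation, so the assignment $\tau\mapsto\mathrm{Fix}(\tau)\in\mathcal M$ is injective, with image a real-analytic subset of $\mathcal M$ of real dimension at most $b_1(X)$.

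The main obstacle is the concluding dimension comparison: this image must be shown to be a proper subset of $\mathcal M$. When $b_1(X)<b_1(S)$ the conclusion is immediate. In general I would use the infinitesimal identification: deformations of $S$ realised as fixed loci come from the composition $H^0(X,TX)\to H^0(S,\Omega^1_S)$ obtained by restricting parallel holomorphic vector fields and dualising via $\theta_J$. Classes in $H^{1,0}(S)$ outside the image of this restriction---whose existence reflects the hypothesis $b_1(S)>0$ together with the fact that $S$ carries $1$-cohomology beyond the restrictions of global forms on $X$---furnish tangent directions in $\mathcal M$ along which generic deformations $S_t$ cannot arise as fixed point sets, and integrating along these directions produces the required continuous families.
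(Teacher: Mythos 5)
Your first step is fine and coincides with the paper's: $S$ is a compact complex Lagrangian submanifold of $(X,J,\theta_J)$ by Theorem \ref{thm:fixed-point}, and by the unobstructedness theorem for complex Lagrangian deformations (the paper cites Hitchin and McLean; Voisin's version serves equally well) there is a local moduli space of deformations of $S$ of real dimension $b_1(S)>0$, hence a genuinely positive-dimensional continuous family. The gap is in your second step, and you flag it yourself: the comparison between the set of fixed-point loci and the whole moduli space is never closed. Your fallback infinitesimal argument rests on the unproven assertion that $S$ carries $1$-cohomology ``beyond the restrictions of global forms on $X$''; this does not follow from $b_1(S)>0$ alone, so as written the crux of the theorem is left as a gesture. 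There are also secondary loose ends in that step (the claim that $\tau\mapsto\mathrm{Fix}(\tau)$ has real-analytic image of dimension at most $b_1(X)$ in the moduli space is plausible but not argued, and by restricting to involutions with $\tau^*\omega_I=-\omega_I$ you treat a smaller class of involutions than the statement requires).

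What you are missing is the rigidity fact specific to compact hyper-K\"ahler manifolds, which is the actual engine of the paper's proof: since $\Omega^{2d}_X$ is holomorphically trivial, $TX\cong\Omega^{2d-1}_X$, and $H^0(X,\Omega^{2d-1}_X)=0$ by Beauville's computation for irreducible holomorphic symplectic manifolds; hence $H^0(X,TX)=0$ and the holomorphic automorphism group of $X$ is discrete, so countable. Any two anti-holomorphic involutions differ by a holomorphic automorphism, so there are at most countably many anti-holomorphic involutions of $X$, and therefore at most countably many candidate fixed-point loci. A countable collection cannot exhaust the uncountable, positive-dimensional family of Lagrangian deformations of $S$, which finishes the proof with no dimension count of $\mathrm{Isom}(X,g)$ and no injectivity claim for $\tau\mapsto\mathrm{Fix}(\tau)$. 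Equivalently, in your own framework: for a compact irreducible hyper-K\"ahler manifold $b_1(X)=0$, so the inequality $b_1(X)<b_1(S)$ that you call the ``immediate'' special case in fact always holds under the hypothesis $b_1(S)>0$; but you neither state nor use this, and your general argument does not substitute for it.
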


It is natural to ask whether one can construct new examples of 
$(A,B,A)$ branes in the moduli space of Higgs bundles via deformations of the 
discrete families in \cite{BS}, \cite{BS2}, \cite{BG} and \cite{HWW}.

\section{The fixed point set of an anti-holomorphic involution on a hyper-K\"ahler manifold}\label{sec:fixed-points}

In this section we prove Theorem \ref{thm:fixed-point}, which describe the geometric 
structures on the fixed point locus of the involution $\sigma$.

A hyper-K\"ahler manifold is a quintuple $(X\, , g\, ,I\, ,J\, ,K)$, where
$X$ is a connected smooth manifold, $g$ is a Riemannian metric on $X$ and $I$, $J$,
$K$ are integrable almost complex structures on $X$ such that
\begin{enumerate}
\item $I$, $J$ and $K$ are orthogonal with respect to $g$,

\item the Hermitian forms for $(I\, ,g)$, $(J\, ,g)$ and
$(K\, ,g)$ are closed, and

\item $K\,= \,IJ\,=\, -JI$, $J\,=\, -IK\,=\, KI$ and $I\,=\, JK
\,=\, -KJ$.
\end{enumerate} 
(See \cite{Jo}.)
The hyper-K\"ahler manifold is called \textit{irreducible} if the holonomy
of the Levi--Civita connection corresponding to $g$ has holonomy $\text{Sp}(m/4)$,
where $m\,=\,\dim_{\mathbb R} X$.

Let $(X\, , g\, ,I\, ,J\, ,K)$ be a compact irreducible hyper-K\"ahler manifold
of real dimension $4d$. Let $\omega$ be the K\"ahler form for $(I\, ,g)$. Let
\begin{equation}\label{e1}
\sigma\, :\, X\, \longrightarrow\, X
\end{equation}
be an anti-holomorphic involution with respect to $I$ such that $$\sigma^*\omega\,=\,-
\omega\, .$$
Since $\sigma$ is
anti-holomorphic with respect to $I$, we have $\sigma^*I \,=\, -I$.
We should emphasize that a general hyper-K\"ahler manifold does not admit such
an involution. See \cite{Kh}, \cite{DIK} for examples of $K3$ surfaces admitting
such involution (see also \cite{Ni1}, \cite{Ni2}).

Henceforth, unless specified otherwise, $I$ would be taken as the complex
structure on $X$. So all holomorphic objects on $X$ are with respect to $I$
(unless specified otherwise). For example, $\Omega_X$ denotes the holomorphic
cotangent bundle of $X$ with respect to $I$.
 
Since the hyper-K\"ahler manifold $X$ is irreducible, we have
$\dim H^0(X,\, \Omega^2_X)\,=\, 1$ \cite[p. 762, Proposition 3(ii)]{Be}.
Take any nonzero element $\theta'\,\in\, H^0(X,\, \Omega^2_X)$. We have
\begin{equation}\label{e2}
\sigma^*\theta'\, =\, c\cdot \overline{\theta'}\, ,
\end{equation}
where $c\, \in\, {\mathbb C}\setminus \{0\}$. From \eqref{e2} we have
$$
\sigma^*\sigma^*\theta'\,=\, c\overline{c}\theta'\, .
$$
Since $\sigma\circ\sigma\,=\, \text{Id}_X$, this implies that
$|c|\,=\, 1$. Setting $\theta\, := \sqrt{\bar{c}} \,  \theta'$, we obtain $\sigma^*\theta\, = \overline{\theta}$.

Fix an element $\theta\, \in\, H^0(X,\, \Omega^2_X)\setminus\{0\}$ such that
\begin{enumerate}
\item $\sigma^*\theta\, =\,\overline{\theta}$, and

\item the pointwise norm of $\theta$ with respect to $g$ is $2\sqrt{d}$.
\end{enumerate}
Such a section $\theta$ exists because the holomorphic $2$-forms on $X$ are
covariant constant with respect to the Levi-Civita connection on $(X\, ,g)$.
Note that any two such choices differ by multiplication with $-1$.

\begin{lemma}\label{lem1}
The involution $\sigma$ has the following properties:
\begin{enumerate}
\item It is an isometry for the Riemannian structure $g$ on $M$.

\item $\sigma^*{\rm Re}(\theta)\,=\, {\rm Re}(\theta)$, where ${\rm Re}(\theta)$
is the real part of $\theta$.

\item $\sigma^*{\rm Im}(\theta)\,=\, - {\rm Im}(\theta)$, where ${\rm Im}(\theta)$ 
is the imaginary part of $\theta$.
\end{enumerate}
\end{lemma}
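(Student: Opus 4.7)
Parts (2) and (3) should follow immediately from the defining relation $\sigma^*\theta=\bar\theta$. Since $\sigma$ is a smooth real map, pullback commutes with complex conjugation of forms, so $\sigma^*\bar\theta = \overline{\sigma^*\theta}=\theta$. Writing $\mathrm{Re}(\theta)=\tfrac12(\theta+\bar\theta)$ and $\mathrm{Im}(\theta)=\tfrac{1}{2i}(\theta-\bar\theta)$ and applying $\sigma^*$ termwise yields $\sigma^*\mathrm{Re}(\theta)=\mathrm{Re}(\theta)$ and $\sigma^*\mathrm{Im}(\theta)=-\mathrm{Im}(\theta)$ directly.

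For part (1), my plan is to exploit the hyper-K\"ahler compatibility $g(u,v)=\omega_I(u,Iv)$. Anti-holomorphicity of $\sigma$ with respect to $I$ is the statement $\sigma^*I=-I$, which at the level of tangent maps reads $I\circ d\sigma = -d\sigma\circ I$. Combined with the hypothesis $\sigma^*\omega_I=-\omega_I$, a short sign-chasing computation
\begin{align*}
(\sigma^*g)(u,v) &= \omega_I(d\sigma\,u,\,I\,d\sigma\,v) = -\omega_I(d\sigma\,u,\,d\sigma\,Iv) \\
&= -(\sigma^*\omega_I)(u,Iv) = \omega_I(u,Iv) = g(u,v)
\end{align*}
gives $\sigma^*g=g$, so $\sigma$ is an isometry.

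I do not anticipate any genuine obstacle here: the lemma is really a bookkeeping exercise in which the two sign reversals in the hypotheses (one from $\sigma$ being an anti-involution of $I$, one from the sign on $\omega_I$) cancel to produce the isometry property, while the assertions on $\mathrm{Re}(\theta)$ and $\mathrm{Im}(\theta)$ are just the definition of $\sigma^*\theta=\bar\theta$ rewritten in real coordinates.
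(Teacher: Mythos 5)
Your proof is correct and takes essentially the same route as the paper: the paper also deduces (1) from the compatibility $\omega(\alpha,\beta)=g(I\alpha,\beta)$ combined with $\sigma^*I=-I$ and $\sigma^*\omega=-\omega$, and obtains (2) and (3) directly from $\sigma^*\theta=\overline{\theta}$. You merely spell out the sign-chase and the real/imaginary decomposition that the paper leaves implicit.
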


\begin{proof}
We recall that $\omega(\alpha\, ,\beta)\,=\, g(I(\alpha)\, ,\beta)$, where $\alpha$
and $\beta$ are real tangent vectors at a point of $X$.
Since $\sigma^*I\,=\, -I$ and $\sigma^*\omega\,=\,-\omega$, the first
statement follows. The remaining two statements follow from the fact that
$\sigma^*\theta\, =\,\overline{\theta}$.
\end{proof}

Let
\begin{equation}\label{e4}
S\, \,=\, X^\sigma\, \subset\, X
\end{equation}
be the subset fixed pointwise by $\sigma$. Assume that $S$ is nonempty.
This $S$ is a real manifold of dimension $2d$, but
it need not be connected. The complex dimension of $X$ being even, the involution
$\sigma$ is orientation preserving. Let $N$ be the normal bundle of $S$. As
the differential of $\sigma$ acts on $N$ as multiplication by $-1$, and the rank of
$N$ is even, we conclude that $d\sigma$ preserves the orientation of $N$. Since
$\sigma$ is orientation preserving and $d\sigma$ preserves the orientation of $N$,
it follows that $S$ is oriented.

\begin{lemma}\label{lem2}
The fixed point locus $S$ is a special Lagrangian submanifold with respect to
$(\omega\, ,\theta^d)$.
\end{lemma}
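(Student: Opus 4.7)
The plan is to verify the two conditions defining a special Lagrangian submanifold of $X$ with respect to $(\omega,\theta^d)$: first the Lagrangian condition $\omega|_S\,=\,0$, and second the special condition ${\rm Im}(\theta^d)|_S\,=\,0$. Both vanishings will follow from a single general observation: since $\sigma$ fixes $S$ pointwise, the inclusion $\iota\,:\,S\,\hookrightarrow\, X$ satisfies $\iota^*\sigma^*\,=\,\iota^*$, so any differential form $\alpha$ on $X$ with $\sigma^*\alpha\,=\,-\alpha$ must restrict to zero on $S$, because $\alpha|_S\,=\,(\sigma^*\alpha)|_S\,=\,-\alpha|_S$.

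For the Lagrangian condition I apply this observation with $\alpha\,=\,\omega$, invoking the standing hypothesis $\sigma^*\omega\,=\,-\omega$. The dimension count $\dim_{\R}S \,=\, 2d \,=\, \tfrac{1}{2}\dim_{\R}X$ makes $S$ of the correct Lagrangian dimension. For the special condition I compute
\[
\sigma^*(\theta^d)\,=\,(\sigma^*\theta)^d\,=\, \overline{\theta}^{\,d}\,=\,\overline{\theta^d},
\]
using the normalization $\sigma^*\theta\,=\,\overline{\theta}$ fixed before Lemma \ref{lem1}; taking imaginary parts yields $\sigma^*{\rm Im}(\theta^d)\,=\,-{\rm Im}(\theta^d)$, and the observation applies with $\alpha\,=\,{\rm Im}(\theta^d)$.

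The only point requiring additional care is checking that $(\omega,\theta^d)$ really does endow $S$ with a \emph{special Lagrangian calibration}, rather than merely giving the two vanishings. This is a pointwise linear-algebra check at a point $p\,\in\, S$, independent of $\sigma$: the prescribed pointwise norm $|\theta|\,=\, 2\sqrt{d}$ is exactly the normalization (corresponding under the irreducibility hypothesis to $\theta\,=\, \omega_J+i\omega_K$ up to a unit phase) which guarantees that ${\rm Re}(\theta^d)|_S$ coincides, up to a universal factor involving $d!$, with the Riemannian volume form of $g|_S$, so that the nonvanishing real part genuinely calibrates $S$. The main obstacle in the argument is essentially just spotting the general observation of the first paragraph; once it is in hand, both vanishings reduce to one-line applications of Lemma \ref{lem1} and the hypothesis $\sigma^*\omega\,=\, -\omega$.
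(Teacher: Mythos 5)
Your proposal is correct and follows essentially the same argument as the paper: both use that $\iota^*\sigma^*=\iota^*$ on the pointwise-fixed locus to kill $\omega|_S$ from $\sigma^*\omega=-\omega$, and to kill the imaginary part of the holomorphic volume from $\sigma^*\theta=\overline{\theta}$ (the paper first deduces $\iota^*{\rm Im}(\theta)=0$ and then passes to $\theta^d$, while you pull back $\theta^d$ directly -- a negligible variation). Your closing remark on the normalization $|\theta|=2\sqrt{d}$ ensuring that ${\rm Re}(\theta^d)$ is a genuine calibration is a harmless addition consistent with the paper's setup.
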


\begin{proof}
Let
$$
\iota\, :\, S\, \hookrightarrow\, X
$$
be the inclusion map. We have $\iota^*\sigma^*\omega\,=\, \iota^*\omega$
because $S$ is fixed pointwise by $\sigma$. On the other
hand, $\sigma^*\omega\,=\,-\omega$. Combining these we get that
$$
\iota^*\omega\,=\, 0\, .
$$
Therefore, $S$ is Lagrangian with respect to the K\"ahler form $\omega$.

To prove that $S$ is special Lagrangian we need to show that
\begin{equation}\label{f1}
\iota^* {\rm Im}(\theta^d)\,=\, 0\, ,
\end{equation}
where ${\rm Im}(\theta^d)$ is the imaginary part of the $(2d\, ,0)$-form
$\theta^d$.

We have $\iota^*\sigma^*\theta\,=\, \iota^*\theta$ because $S$ is fixed
pointwise by $\sigma$. On the other hand, from Lemma \ref{lem1}(3), 
$$
\iota^*{\rm Im}(\theta)\,=\, - \iota^*{\rm Im}(\theta)\, .
$$
Combining these it follows that $\iota^*{\rm Im}(\theta)\,=\, 0$. This immediately
implies that \eqref{f1} holds. So $S$ is a special Lagrangian manifold.
\end{proof}

Let $J$ be the almost complex structure on $X$ uniquely given by the equation
\begin{equation}\label{e6}
{\rm Re}(\theta)(\alpha\, ,\beta)\,=\, g(J(\alpha)\, ,\beta)\, ,
\end{equation}
where $\alpha$ and $\beta$ are real tangent vectors at a point of $X$. Similarly,
$K$ is the almost complex structure on $X$ that satisfies the equation
\begin{equation}\label{e7}
{\rm Im}(\theta)(\alpha\, ,\beta)\,=\, g(K(\alpha)\, ,\beta)\, .
\end{equation}
Both $(X\, ,J\, , g)$ and $(X\, , K\, ,g)$ are K\"ahler manifolds, in particular,
both $J$ and $K$ are integrable. Let $\omega_J$ and $\omega_K$ be the K\"ahler forms
for $(J\, , g)$ and $(K\, ,g)$ respectively. So, from \eqref{e6} and \eqref{e7},
\begin{equation}\label{e8}
{\rm Re}(\theta)(\alpha\, ,\beta)\,=\, \omega_J(\alpha\, ,\beta) \, ~ \ \text{ and }
\ ~ \, {\rm Im}(\theta)(\alpha\, ,\beta)\,=\, \omega_K(\alpha\, ,\beta)
\end{equation}
We recall that
\begin{equation}\label{e5}
\theta_J\, :=\, \omega_K+\sqrt{-1}\omega \, ~ \ \text{ and }\ ~ \,
\theta_K\, :=\, \omega_J+\sqrt{-1}\omega
\end{equation}
are holomorphic symplectic forms on the K\"ahler manifolds $(X\, , J\, , g)$
and $(X\, , K\, , g)$ respectively.

\begin{proposition}\label{prop1}
The involution $\sigma$ is holomorphic with respect to $J$, and it is
anti-holomorphic with respect to $K$. Also,
$$
\sigma^*\omega_J \,=\, \omega_J \, ~ \ \text{ and }\ ~ \,
\sigma^*\omega_K \,=\, -\omega_K\, .
$$
Furthermore,
$$
\sigma^*\theta_J \,=\, -\theta_J \, ~ \ \text{ and }\ ~ \,
\sigma^*\theta_K \,=\, \overline{\theta_K}\, ,
$$
where $\theta_J$ and $\theta_K$ are defined in \eqref{e5}.
\end{proposition}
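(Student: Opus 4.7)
The plan is to derive every clause of the proposition mechanically from Lemma~\ref{lem1} together with the definitions \eqref{e6}--\eqref{e8} and \eqref{e5}; no substantially new idea should be needed.

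First I would handle the two K\"ahler forms. By \eqref{e8} one has $\omega_J = \mathrm{Re}(\theta)$ and $\omega_K = \mathrm{Im}(\theta)$, so Lemma~\ref{lem1}(2)--(3) yield immediately $\sigma^*\omega_J = \omega_J$ and $\sigma^*\omega_K = -\omega_K$.

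Next, to read off the behaviour of $\sigma$ with respect to $J$ and $K$, I would combine the fact that $\sigma$ is an isometry (Lemma~\ref{lem1}(1)) with the compatibility relations \eqref{e6} and \eqref{e7}. Concretely, $(\sigma^*\omega_J)(\alpha,\beta) = g\bigl(J\,d\sigma(\alpha),\,d\sigma(\beta)\bigr)$, and the isometry property lets me rewrite this as $g\bigl(d\sigma^{-1} J\,d\sigma(\alpha),\,\beta\bigr)$. Comparing with $\omega_J(\alpha,\beta) = g(J\alpha,\beta)$, the identity $\sigma^*\omega_J = \omega_J$ forces $d\sigma \circ J = J \circ d\sigma$, i.e., $\sigma$ is holomorphic with respect to $J$. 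The same calculation applied to $\omega_K$, together with the sign flip $\sigma^*\omega_K = -\omega_K$, yields $d\sigma \circ K = -K \circ d\sigma$, so $\sigma$ is anti-holomorphic with respect to $K$.

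Finally, the statements about $\theta_J$ and $\theta_K$ are obtained by applying $\sigma^*$ to the formulas \eqref{e5}, using what has just been established and the hypothesis $\sigma^*\omega = -\omega$:
\[
\sigma^*\theta_J \,=\, \sigma^*\omega_K + \sqrt{-1}\,\sigma^*\omega \,=\, -\omega_K - \sqrt{-1}\,\omega \,=\, -\theta_J,
\]
\[
\sigma^*\theta_K \,=\, \sigma^*\omega_J + \sqrt{-1}\,\sigma^*\omega \,=\, \omega_J - \sqrt{-1}\,\omega \,=\, \overline{\theta_K},
\]
where the last equality uses that $\omega_J$ and $\omega$ are real-valued. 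The whole proposition is therefore essentially a bookkeeping exercise; the substantive content has already been packaged into Lemma~\ref{lem1} (the normalization $\sigma^*\theta = \overline{\theta}$) and into the identification of the K\"ahler forms of $J$ and $K$ with $\mathrm{Re}(\theta)$ and $\mathrm{Im}(\theta)$. There is no serious obstacle; the only thing to watch carefully is sign conventions, since $\sigma^*$ acts on the real and imaginary parts of the $(2,0)$-form $\theta$ with opposite signs, while it acts on the real $(1,1)$-form $\omega$ simply as multiplication by $-1$.
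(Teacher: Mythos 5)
Your proposal is correct and follows essentially the same route as the paper: both deduce $\sigma^*\omega_J=\omega_J$ and $\sigma^*\omega_K=-\omega_K$ from Lemma~\ref{lem1}(2)--(3) via \eqref{e8}, obtain the (anti-)holomorphicity of $\sigma$ for $J$ and $K$ from the isometry property together with the defining relations \eqref{e6}--\eqref{e7} (your explicit nondegeneracy argument just spells out what the paper states directly), and then get the claims for $\theta_J,\theta_K$ by applying $\sigma^*$ to \eqref{e5}. No gaps; the sign bookkeeping is handled correctly.
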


\begin{proof}
In view of Lemma \ref{lem1}(1) and Lemma \ref{lem1}(2), from \eqref{e6}
we conclude that $\sigma$ preserves $J$. Similarly, from
Lemma \ref{lem1}(1), Lemma \ref{lem1}(2) and \eqref{e7} it follows that
$\sigma$ takes $K$ to $-K$.

{}From Lemma \ref{lem1}(2) and \eqref{e8} it follows that
$\sigma^*\omega_J \,=\, \omega_J$. From \ref{lem1}(3) and \eqref{e8} it
follows that $\sigma^*\omega_K \,=\, -\omega_K$.

Since $\sigma^*\omega\,=\, -\omega$ and $\sigma^*\omega_K\,=\, -\omega_K$, from
\eqref{e5} it follows that $\sigma^*\theta_J \,=\, -\theta_J$. Also, as
$\sigma^*\omega_J\,=\, \omega_J$, we have $\sigma^*\theta_K \,=\, \overline{\theta_K}$.
\end{proof}

\begin{corollary}\label{cor1}
The fixed point locus $S$ is a special Lagrangian submanifold with respect to
$(\omega_K\, ,(\theta_K)^d)$.
\end{corollary}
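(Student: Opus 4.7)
The plan is to mimic the proof of Lemma \ref{lem2} line by line, with $(\omega,\theta)$ replaced by $(\omega_K,\theta_K)$, since Proposition \ref{prop1} has already supplied precisely the analogous transformation rules: $\sigma^*\omega_K = -\omega_K$ and $\sigma^*\theta_K = \overline{\theta_K}$. Thus all the ingredients needed in the proof of Lemma \ref{lem2} (namely, a K\"ahler form that changes sign under $\sigma$, and a holomorphic symplectic form whose pullback by $\sigma$ is its complex conjugate) are present in the $K$-picture.

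First I would note that $(X,K,g)$ is a K\"ahler manifold with K\"ahler form $\omega_K$ and holomorphic symplectic form $\theta_K$ (with respect to the complex structure $K$), so the notion of special Lagrangian with respect to $(\omega_K,\theta_K^d)$ makes sense. Then, letting $\iota:S\hookrightarrow X$ denote the inclusion, the identity $\iota^*\sigma^*\omega_K = \iota^*\omega_K$ together with $\sigma^*\omega_K = -\omega_K$ forces $\iota^*\omega_K = 0$, so $S$ is Lagrangian with respect to $\omega_K$.

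Next, from $\sigma^*\theta_K = \overline{\theta_K}$ I would deduce $\sigma^*\mathrm{Im}(\theta_K) = -\mathrm{Im}(\theta_K)$, and combining this with $\iota^*\sigma^*\mathrm{Im}(\theta_K) = \iota^*\mathrm{Im}(\theta_K)$ gives $\iota^*\mathrm{Im}(\theta_K) = 0$. Since $\iota^*$ commutes with taking imaginary parts and wedge products, $\iota^*\mathrm{Im}(\theta_K^d) = 0$, which is the defining condition for $S$ to be special Lagrangian with respect to $(\omega_K,\theta_K^d)$.

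There is essentially no obstacle here, since Proposition \ref{prop1} has done all the heavy lifting; the only mild point worth mentioning (though not essential for the statement of the corollary) is the pointwise normalization of $\theta_K$. This follows because $\theta$ and $\theta_K$ are built from the same triple $(\omega,\omega_J,\omega_K)$, so the normalization condition on $\theta$ transfers to a normalization of $\theta_K$ of the same form. Otherwise the corollary reduces to a transcription of the proof of Lemma \ref{lem2} with $K$ in place of $I$.
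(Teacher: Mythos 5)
Your proposal is correct and follows exactly the paper's argument: the paper's proof of this corollary simply observes that, by Proposition \ref{prop1}, $\sigma$ is anti-holomorphic with respect to $K$ with $\sigma^*\omega_K = -\omega_K$ and $\sigma^*\theta_K = \overline{\theta_K}$, so the proof of Lemma \ref{lem2} goes through verbatim. Your spelled-out transcription of that argument is precisely what is intended.
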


\begin{proof}
Since $\sigma$ is anti-holomorphic with respect to $K$,
$\sigma^*\omega_K \,=\, -\omega_K$ and $\sigma^*\theta_K \,=\,
\overline{\theta_K}$, the proof of Lemma \ref{lem2} goes through.
\end{proof}

\begin{corollary}\label{cor2}
The fixed point locus $S$ is a complex Lagrangian submanifold with respect to
$(J\, , \theta_J)$.
\end{corollary}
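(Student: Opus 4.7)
The plan is to verify the two defining properties of a complex Lagrangian submanifold with respect to $(J,\theta_J)$: (i) the tangent bundle $TS$ is invariant under $J$ (so $S$ is a complex submanifold of $(X,J)$), and (ii) the pullback $\iota^*\theta_J$ vanishes. Both properties should fall straight out of Proposition \ref{prop1}.

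For (i), I would invoke Proposition \ref{prop1} to the effect that $\sigma$ is holomorphic with respect to $J$, so the differential $d\sigma$ commutes with $J$ at every point. At a fixed point $p\in S$, the tangent space $T_pS$ is by definition the $+1$-eigenspace of $d\sigma_p$ acting on $T_pX$. Because $J$ commutes with $d\sigma_p$, it preserves each eigenspace of $d\sigma_p$, and in particular preserves $T_pS$. Thus $S$ is a $J$-complex submanifold of $X$; since its real dimension is $2d=\dim_{\mathbb C} X$, it has the correct dimension to be complex Lagrangian.

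For (ii), I would run the argument exactly as in Lemma \ref{lem2}. Since $S$ is pointwise fixed by $\sigma$, one has $\iota^*\sigma^*\theta_J=\iota^*\theta_J$. On the other hand, Proposition \ref{prop1} gives $\sigma^*\theta_J=-\theta_J$, so $\iota^*\theta_J=-\iota^*\theta_J$, forcing $\iota^*\theta_J=0$. Combined with (i), this shows $S$ is complex Lagrangian with respect to $(J,\theta_J)$.

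There is essentially no obstacle: the work has already been done in Proposition \ref{prop1}, and the present corollary is the formal consequence. The only mild subtlety worth flagging is the dimension count in (i)—one should note that $S$ being $J$-invariant of real dimension $2d$ in a $J$-complex manifold of real dimension $4d$ makes it a $J$-complex submanifold of complex dimension $d$, which is the right half-dimension for Lagrangianity with respect to the holomorphic symplectic form $\theta_J$.
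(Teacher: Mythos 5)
Your proposal is correct and follows the paper's own proof: the paper likewise uses the $J$-holomorphy of $\sigma$ from Proposition \ref{prop1} to conclude $S$ is $J$-complex, and the relation $\sigma^*\theta_J=-\theta_J$ pulled back to the fixed locus to get $\iota^*\theta_J=0$. You merely spell out the eigenspace and dimension details that the paper leaves as "straightforward."
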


\begin{proof}
Since $\sigma$ is holomorphic with respect to $J$, it follows that $S$ is a
complex submanifold with respect to $J$. As $\sigma^*\theta_J \,=\,
-\theta_J$, it is straightforward to deduce that $S$ is a Lagrangian
submanifold with respect to the holomorphic symplectic structure $\theta_J$.
\end{proof}

\section{Deformations of complex Lagrangian submanifolds}\label{sec:deformations}

Let $(X\, , \omega^c)$ be a complex manifold of complex dimension $2d$
equipped with a holomorphic symplectic form $\omega^c$.
Let $Y \subset X$ be a complex Lagrangian submanifold (i.e., $\omega^c$ vanishes on 
$Y$). Suppose also that $X$ has a K\"ahler structure and that $Y$ is compact. In 
\cite{Hitchin99}, Hitchin studies the deformation space of compact complex Lagrangian 
submanifolds near $Y$ and shows that
\begin{enumerate}
\item the deformations are unobstructed (see also \cite{McLean98}),

\item there exists a local moduli space $M$ with real dimension equal to the first 
Betti number of $Y$, $\dim_{\R} T_{[Y]} M = b_1(Y)$, and

\item $M$ has a naturally induced special K\"ahler structure.
\end{enumerate}

For an irreducible compact hyper-K\"ahler manifold of complex dimension $2d$, we have
$TX\,=\, \Omega^{2d-1}_X$, where $TX$ is the holomorphic tangent bundle, because
$\Omega^{2d}_X$ is holomorphically trivial. Hence
$$
H^0(X,\, TX)\,=\, H^0(X,\, \Omega^{2d-1}_X)\,=\, 0
$$
(see \cite[p. 762, Proposition 3(ii)]{Be}). This implies that the group of
holomorphic automorphisms of $X$ is discrete. Any two anti-holomorphic involutions
on $X$ differ by a holomorphic automorphism, so there are at most countably many
anti-holomorphic involutions $X$.

Therefore we have the following:

\begin{theorem}
Let $(X,\omega)$ be a compact hyper-K\"ahler manifold equipped with an anti-holomorphic 
involution $\sigma$ such that $\sigma^* \omega = - \omega$. Suppose that the fixed 
point set $S$ satisfies $b_1(S) \,>\, 0$. Then there exist complex Lagrangian 
submanifolds of $X$ that are not fixed points of an anti-holomorphic involution 
$X \,\longrightarrow\, X$.
\end{theorem}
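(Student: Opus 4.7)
The plan is to combine Hitchin's deformation theory (recalled in the paragraphs preceding the theorem) with a cardinality comparison: the compact complex Lagrangian deformations of $S$ form a positive-dimensional, hence uncountable, smooth local moduli space, whereas the set of fixed loci of anti-holomorphic involutions of $X$ is only countable. The desired examples then arise as points of the moduli space outside a countable subset.

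First I would observe that $X$ being compact forces $S \,=\, X^\sigma$ to be compact, and Corollary \ref{cor2} gives that $S$ is a complex Lagrangian submanifold of $(X,\,J,\,\theta_J)$. Applying Hitchin's theorem to $S$ inside $(X,\,\theta_J)$ produces a smooth local moduli space $M$ of compact complex Lagrangian submanifolds near $S$, of real dimension $b_1(S)\,>\,0$. In particular $M$ is uncountable, and each of its points is a genuine embedded submanifold of $X$ obtained by a small normal deformation of $S$.

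Second I would count the anti-holomorphic involutions. The discussion preceding the statement gives $H^0(X,\,TX)\,=\,0$, so the holomorphic automorphism group $\text{Aut}(X)$ is a discrete Lie group, and since $X$ is second-countable it is therefore countable. Any two anti-holomorphic involutions of $X$ differ by post-composition with an element of $\text{Aut}(X)$, so the set of anti-holomorphic involutions $\tau\,:\,X\,\longrightarrow\,X$ is countable. For each such $\tau$, the fixed locus $X^\tau$ is a closed submanifold of the compact manifold $X$ and hence has finitely many connected components, so the collection of all submanifolds of $X$ that can arise as $X^\tau$ (or as a connected component thereof) is countable.

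To conclude: since $M$ is uncountable but only countably many of its points can coincide with such a fixed locus, the complement is nonempty; in fact the removed set has empty interior in the positive-dimensional smooth manifold $M$, so the complement is dense and supplies the required complex Lagrangian submanifolds. The only nontrivial input is Hitchin's theorem, which guarantees a smooth moduli space of the expected real dimension and certifies that its points are actual embedded submanifolds of $X$; once this is granted, the rest is a routine cardinality comparison.
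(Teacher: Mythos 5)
Your proposal is correct and follows essentially the same route as the paper: Hitchin's deformation theorem gives an uncountable (positive-dimensional, since $b_1(S)>0$) local moduli space of compact complex Lagrangian deformations of $S$, while the vanishing of $H^0(X,TX)$ makes the automorphism group discrete and hence the set of anti-holomorphic involutions (and their fixed loci) countable, so a cardinality comparison finishes the argument. The extra details you supply (countability of the discrete automorphism group via second countability, finitely many components of each fixed locus, density of the complement) only make explicit what the paper leaves implicit.
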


Let $X$ be a $K3$ surface with an involution $\sigma$ which is anti-holomorphic with 
respect to one of the complex structures $I$. Then there is a $\sigma$-invariant 
hyper-K\"ahler metric on $X$ and $\sigma$ is anti-symplectic with respect to 
$\omega_I$ and holomorphic with respect to one of the complex structures orthogonal 
to the original one (cf. \cite[pp. 21-22]{Donaldson90}).

Explicit examples of such anti-holomorphic involutions of $K3$ surfaces with
$b_1(S) \,>\, 0$ can be found in \cite{Kh} (see \cite[Section 3.4]{Kh} and
\cite[Section 3.8]{Kh}). Gross and Wilson have also studied such involutions on $K3$ surfaces in the context of mirror symmetry in \cite{GW}.

\section*{Acknowledgements}

We thank the referee for helpful comments. The second named author would like to thank the Tata
Institute for Fundamental Research for their hospitality while this paper was being written.

%%%%%%%%%%%%%%%%%%%%%%%%%%%%%%%%%%%%%%%%%%%%%%%%%%%%%%%%%%%%%%%%

\end{document}